\newtheorem{theorem}{Theorem}[section]
\newtheorem{lemma}[theorem]{Lemma}
\theoremstyle{definition}
\theoremstyle{remark}
\numberwithin{equation}{section}
\begin{document}

\title[Jordan Nilpotent Group Rings]{Jordan Nilpotent Group Rings of index $4$}


\author{Meena Sahai}
\address{Department of Mathematics and Astronomy, University of Lucknow, Lucknow 226007, India.}
\curraddr{}
\email{sahai\_m@lkouniv.ac.in}
\thanks{Corresponding Author}

\author{Sachin Singh}
\address{Department of Mathematics and Astronomy, University of Lucknow, Lucknow 226007, India.}
\curraddr{}
\email{sachinsinghndps@gmail.com}
\thanks{}

\subjclass[2020]{Primary 16S34; 17B30}

\date{}

\begin{abstract}
Let $RG$ be the group ring of an arbitrary group $G$ over an associative non-commutative ring $R$  with identity.  In this paper, we have obtained the necessary and sufficient conditions under which $RG$ is Jordan nilpotent of index $4$.
\end{abstract}

\maketitle

\section{Introduction}

Any associative ring $S$ can be regarded as a Jordan ring under the circle operation $\alpha \circ \beta = \alpha \beta + \beta \alpha$, for all $\alpha,\beta \in S$ and for any integer $n\geq 2$, $\alpha^{\circ n} = \alpha^{\circ (n-1)}\circ \alpha$.  The circle operation or the Jordan product is such that for all $\alpha,\beta \in S$
\begin{enumerate}
\item  $\alpha \circ \beta = \beta \circ \alpha$;
\item $(\alpha^{\circ 2}\circ \beta)\circ \alpha = \alpha^{\circ 2}\circ(\beta\circ \alpha)$.
\end{enumerate}
 The ring $S$ is Jordan nilpotent of index $n$ if $(\dots((\alpha_{1}\circ \alpha_{2})\circ \alpha_{3})\dots)\circ \alpha_{n}=0$ for all $\alpha_1, \alpha_2, \alpha_3,\dots ,\alpha_n\in S$.
 
The Lie product on $S$ is defined as $[\alpha, \beta] = \alpha \beta - \beta \alpha$,  for all $\alpha,\beta \in S$ which satisfies the twin identities:
\begin{enumerate}
\item  $[\alpha,\alpha]=0$;
\item $[[\alpha, \beta],\gamma] + [[\beta,\gamma],\alpha]+[[\gamma,\alpha],\beta]=0$
\end{enumerate} 
for all $\alpha,\beta,\gamma \in S$. With the operations of addition and Lie product, $S$ becomes a Lie ring. $S$ is Lie nilpotent of index $n$ if $[\dots[[\alpha_{1}, \alpha_{2}], \alpha_{3}],\dots ,\alpha_{n}]=0$ for all $\alpha_1, \alpha_2, \alpha_3,\dots ,\alpha_n\in S$.  It is important to know the conditions under which $S$ is Lie nilpotent or Jordan nilpotent. In particular, Lie nilpotent and Lie solvable group rings over commutative rings have been studied well, see \cite{PPS}.   However, not much is known about Jordan nilpotent group rings.  Goodaire and Milies \cite{GM} have classified  Jordan nilpotent group rings of index $3$ over commutative rings. 
\begin{theorem}\label{t2}\cite[Theorem 2.4]{GM}
The group ring $RG$ of a group $G$ over a commutative ring $R$ is Jordan nilpotent of index $3$  if, and only if, one of the following statements holds:
\begin{itemize}
\item[1)] $Char(R)=4$ and $G$ is abelian
\item[2)] $Char(R)=2$ and  $G'\simeq C_2$.
\end{itemize}
\end{theorem}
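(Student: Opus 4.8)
The plan is to exploit $R$-trilinearity to reduce everything to group elements. Since $R$ is commutative and central in $RG$, the triple product $(\alpha\circ\beta)\circ\gamma=\alpha\beta\gamma+\beta\alpha\gamma+\gamma\alpha\beta+\gamma\beta\alpha$ is $R$-trilinear, so $RG$ is Jordan nilpotent of index $3$ if, and only if, this expression vanishes on all triples $g,h,k\in G$. Evaluating at $g=h=k=1$ gives $4\cdot 1=0$, so (for $R\neq 0$) $\mathrm{Char}(R)\in\{2,4\}$. The single most useful specialization is $(g,h,g)$, which yields $2\,ghg+hg^2+g^2h=0$, and I would read off different consequences from it in the two characteristic cases.

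Suppose first that $\mathrm{Char}(R)=4$, so $2\neq 0$. If $gh\neq hg$, then $ghg$ is distinct from both $hg^2$ and $g^2h$ (each equality would force $gh=hg$), yet $ghg$ occurs with coefficient $2$ in $2\,ghg+hg^2+g^2h=0$; comparing coefficients of this basis element forces $2=0$, a contradiction. Hence $G$ is abelian, which is case (1). Conversely, if $G$ is abelian then $RG$ is commutative, $\alpha\circ\beta=2\alpha\beta$ and $(\alpha\circ\beta)\circ\gamma=4\alpha\beta\gamma=0$, while $\alpha\circ\beta=2\alpha\beta\not\equiv 0$; so the index is exactly $3$.

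Now suppose $\mathrm{Char}(R)=2$. Here the relation at $(g,h,g)$ collapses to $hg^2+g^2h=0$, i.e. $hg^2=g^2h$, so every square $g^2$ is central. Thus $G/Z(G)$ has exponent $2$, whence $G$ is nilpotent of class at most $2$, all commutators are central of order dividing $2$, and $G'$ is an elementary abelian $2$-group. With commutators central the commutator map is bilinear, and a direct rewriting of the four terms gives $(g\circ h)\circ k=ghk\,(1+[h,g])(1+[k,g][k,h])$ inside the group algebra of $G'$. Its vanishing for all triples is equivalent to the combinatorial condition: for all $g,h,k$, either $[h,g]=1$, or $[k,g]=[k,h]$, or $[h,g]=[k,g][k,h]$. (For the converse, when $G'\cong C_2$ is central of order $2$ every factor $(1+[h,g])$ or $(1+[k,g][k,h])$ is either $0$ or of the form $1+z$ with $(1+z)^2=0$, so the product vanishes termwise and, since $G$ is non-abelian, the index is exactly $3$.)

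The main obstacle is to turn the combinatorial condition into $G'\cong C_2$. First I would reparametrize via $h\mapsto gh$, which (using $[gh,g]=[h,g]$ and bilinearity) converts the condition into the clean statement that for every $a\in G$ the set $\{[a,x]:x\in G\}$ is a subgroup of $G'$ of order at most $2$. The delicate step is then to rule out $|G'|\ge 4$: assuming independent commutators $z_1=[a,b]$ and $z_2=[c,d]$, one has $[a,G]=\{1,z_1\}$, $[c,G]=\{1,z_2\}$ and $[d,G]=\{1,z_2\}$, and I would test the element $ac$. Using $[ac,x]=[a,x][c,x]$ one finds $[ac,b]\in\{z_1,z_1z_2\}$ and $[ac,d]\in\{z_2,z_1z_2\}$; as both are nontrivial and lie in the order-$\le 2$ set $[ac,G]$, they must coincide, forcing the common value $z_1z_2$ and hence $[a,d]=z_1$. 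But then $z_1=[d,a]\in[d,G]=\{1,z_2\}$, contradicting the independence of $z_1,z_2$. Therefore $|G'|\le 2$, and since index exactly $3$ forces $RG$ (hence $G$) non-commutative, $G'\cong C_2$, giving case (2). The elimination of $|G'|\ge 4$ is exactly the part requiring care.
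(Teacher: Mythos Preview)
The paper does not supply its own proof of this statement: Theorem~\ref{t2} is quoted verbatim from \cite{GM} as background, with no argument given. So there is nothing in the present paper to compare your attempt against.

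That said, your argument is correct and self-contained. The reduction by $R$-trilinearity to triples of group elements is legitimate because $R$, being commutative, is central in $RG$. The specialization $(g,h,g)$ giving $2ghg+g^2h+hg^2=0$ cleanly separates the two characteristics: in characteristic~$4$ the coefficient-of-$ghg$ argument forces $G$ abelian, and in characteristic~$2$ it gives $G^2\subseteq Z(G)$, whence (as in Lemma~\ref{l3}) $G'\subseteq Z(G)$ is elementary abelian. Your closed formula $(g\circ h)\circ k=ghk\,(1+[h,g])(1+[k,g][k,h])$ is correct in class~$\le 2$, and the reparametrization $h\mapsto gh$ does reduce the vanishing condition to: for every $a\in G$, all nontrivial values of $[a,\cdot]$ coincide, i.e.\ $|[a,G]|\le 2$. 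The elimination of $|G'|\ge 4$ via the element $ac$ is valid: the forced equality $[ac,b]=[ac,d]=z_1z_2$ gives $[a,d]=z_1$, which then lands in $[d,G]=\{1,z_2\}$, contradicting independence. The converse directions are immediate from the same formula, noting that $G'\cong C_2$ is automatically central and that each factor $1+[h,g]$, $1+[k,g][k,h]$ is either $0$ or equals $1+s$ with $(1+s)^2=0$.

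One remark: your interpretation of ``index~$3$'' as \emph{exactly}~$3$ matches how the paper (and \cite{GM,CHR}) uses the term, and your proof respects this by checking non-vanishing at index~$2$ in each case.
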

They also discussed the  Jordan nilpotency of the symmetric elements of the group ring with respect to an involution of the underlying group.  In \cite{CHR}, Jordan nilpotent group rings of index $2$  and Jordan nilpotent group rings of index $3$ over non-commutative rings are characterized. They have also studied  Jordan nilpotent group rings of index $4$ over commutative rings.
\begin{theorem}\label{t1}\cite[Theorem 2.1]{CHR}
The group ring $RG$ of a group $G$ over a ring $R$ is Jordan nilpotent of index $2$ if, and only if, $R$ is commutative, $Char(R)=2$ and $G$ is abelian.
\end{theorem}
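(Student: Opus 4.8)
The plan is to unwind the definition of Jordan nilpotency of index $2$: by the displayed definition this means precisely that $\alpha \circ \beta = \alpha\beta + \beta\alpha = 0$ for all $\alpha, \beta \in RG$. The sufficiency direction is then immediate. If $R$ is commutative, $Char(R) = 2$, and $G$ is abelian, then $RG$ is a commutative ring of characteristic $2$, so for any $\alpha, \beta \in RG$ one has $\alpha \circ \beta = \alpha\beta + \beta\alpha = 2\alpha\beta = 0$. Hence $RG$ is Jordan nilpotent of index $2$.

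For necessity, I would assume $\alpha \circ \beta = 0$ for all $\alpha, \beta \in RG$ and extract the three conditions by evaluating on well-chosen elements. First, taking $\alpha = \beta = 1$ gives $1 \circ 1 = 2 \cdot 1 = 0$ in $RG$, which forces $2 = 0$ in $R$, i.e. $Char(R) = 2$. Next, for scalars $r, s \in R$ (identified with $r \cdot 1, s \cdot 1 \in RG$) the identity $(r \cdot 1) \circ (s \cdot 1) = rs + sr = 0$ together with $Char(R) = 2$ yields $rs = -sr = sr$, so $R$ is commutative. Finally, for group elements $g, h \in G$ the relation $g \circ h = gh + hg = 0$ must be analyzed using the fact that $G$ is an $R$-basis of $RG$: if $gh \neq hg$ these are distinct basis elements, so $gh + hg = 0$ would force the coefficient $1 = 0$ in $R$, contradicting $R \neq 0$; hence $gh = hg$ for all $g, h$, and $G$ is abelian.

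The argument is almost entirely routine substitution, so there is no serious obstacle; the only point requiring a little care is the last step, where one must separate the cases $gh = hg$ and $gh \neq hg$ and invoke the linear independence of the group basis (together with $1 \neq 0$ in $R$) to rule out the second case. The order of the three steps matters: establishing $Char(R) = 2$ first is what makes the scalar and group computations collapse cleanly, since each otherwise produces a sum of two terms rather than a vanishing expression.
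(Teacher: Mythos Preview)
Your proof is correct. Note, however, that the paper itself does not supply a proof of this statement: Theorem~\ref{t1} is merely quoted from \cite[Theorem~2.1]{CHR} as background, so there is no ``paper's own proof'' to compare against. That said, your argument is exactly the natural one and is in line with how the paper handles the closely related Lemma~\ref{l1} (taking $\alpha=\beta=1$) and Lemma~\ref{l2} (forcing $G$ to be abelian from the top characteristic); the commutativity of $R$ then drops out immediately once $Char(R)=2$ is known, just as you wrote.
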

\begin{theorem}\label{t3}\cite[Theorem 3.1]{CHR}
The group ring $RG$ of a group $G$ over a non-commutative ring $R$ is Jordan nilpotent of index $3$  if, and only if, $G$ is abelian, $R$ is Jordan nilpotent of index $3$ with $Char(R)=2$ or $4$.
\end{theorem}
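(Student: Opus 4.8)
The plan is to reduce both implications to the single multilinear identity produced by expanding the index-$3$ Jordan product. Writing $\alpha\circ\beta=\alpha\beta+\beta\alpha$ and expanding $(\alpha\circ\beta)\circ\gamma$, one sees that $RG$ is Jordan nilpotent of index $3$ if and only if
\begin{equation}\label{eq:j3plan}
\alpha\beta\gamma+\beta\alpha\gamma+\gamma\alpha\beta+\gamma\beta\alpha=0
\end{equation}
for all $\alpha,\beta,\gamma\in RG$. The left-hand side is trilinear in $(\alpha,\beta,\gamma)$ (additive and homogeneous in each argument), so for the sufficiency direction it is enough to verify \eqref{eq:j3plan} on monomials $rg$ with $r\in R$, $g\in G$; for necessity I will instead feed carefully chosen substitutions into \eqref{eq:j3plan} and read off constraints on $R$ and $G$.

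For necessity, assume \eqref{eq:j3plan} holds. Restricting to $\alpha,\beta,\gamma\in R=R\cdot 1$ immediately shows $R$ is Jordan nilpotent of index $3$, and the choice $\alpha=\beta=\gamma=1$ gives $4=0$, so $\mathrm{Char}(R)\in\{2,4\}$. The substantive point is that $G$ must be abelian, which I treat by characteristic. In characteristic $4$ the substitution $\alpha=g,\ \beta=h,\ \gamma=1$ collapses \eqref{eq:j3plan} to $2(gh+hg)=0$; were $gh\ne hg$, these would be distinct basis elements with coefficient $2$, forcing $2=0$, a contradiction, so $G$ is abelian. In characteristic $2$ this group-element substitution is vacuous, and here the non-commutativity of $R$ must be brought in: putting $\alpha=rg,\ \beta=sh,\ \gamma=t$ with $r,s,t\in R$ turns \eqref{eq:j3plan} into $(rst+trs)gh+(srt+tsr)hg=0$. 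If $gh\ne hg$, comparing coefficients of the distinct basis elements $gh$ and $hg$ forces $rst+trs=0$ for all $r,s,t$; specialising $s=1$ gives $rt+tr=0$, i.e.\ $R$ is commutative, contradicting the hypothesis. Hence $gh=hg$ for all $g,h$, and $G$ is abelian in this case too.

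For sufficiency, suppose $G$ is abelian and $R$ is Jordan nilpotent of index $3$ (which, via $\alpha=\beta=\gamma=1$, already forces $\mathrm{Char}(R)\in\{2,4\}$, so that condition is automatic). By trilinearity it suffices to evaluate \eqref{eq:j3plan} on monomials. Using $gh=hg$ one computes $rg\circ sh=(rs+sr)\,gh=(r\circ s)\,gh$, and then $((rg)\circ(sh))\circ(tk)=((r\circ s)\circ t)\,ghk$, which vanishes because $R$ is Jordan nilpotent of index $3$. Summing over monomials recovers \eqref{eq:j3plan} on all of $RG$.

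The main obstacle, and the feature distinguishing this statement from the commutative case of Theorem \ref{t2} (where $G'\simeq C_2$ is permitted when $\mathrm{Char}(R)=2$), is controlling $G$ in characteristic $2$. The delicate step is recognising that plain group-element substitutions vanish identically once $2=0$, so one must transmit the non-commutativity of $R$ through the ring coefficients; the right choice is $\gamma=t\cdot 1$ together with $k=1$, which leaves exactly the two basis elements $gh$ and $hg$ whose coefficients can then be separated to force commutativity of $R$ unless $G$ is abelian.
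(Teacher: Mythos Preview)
Your argument is correct. Note, however, that the paper does not supply its own proof of this statement: Theorem~\ref{t3} is quoted from \cite[Theorem~3.1]{CHR} as a preliminary result, so there is no in-paper proof to compare against directly.

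That said, your method is entirely in the spirit of the techniques the paper deploys for its main index-$4$ result (Theorem~\ref{t5}). Your sufficiency direction---reducing by multilinearity to monomials $rg$ and observing that for abelian $G$ one has $((rg)\circ(sh))\circ(tk)=((r\circ s)\circ t)\,ghk$---is exactly the one-line argument the paper records in the index-$4$ setting, namely $((RG\circ RG)\circ RG)\circ RG\subseteq(((R\circ R)\circ R)\circ R)G=0$. For necessity, the paper's toolkit would invoke Lemma~\ref{l1} for the characteristic and Lemma~\ref{l2} to force $G$ abelian in the top characteristic; you instead give the short direct substitutions. The characteristic-$2$ step---attaching a ring coefficient $t$ to the identity element of $G$ and then separating the coefficients of the distinct basis elements $gh$ and $hg$ to deduce $rt+tr=0$, hence commutativity of $R$---is the one genuinely new ingredient beyond the commutative-coefficient case of Theorem~\ref{t2}, and you have isolated it cleanly.
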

\begin{theorem}\label{t4}\cite[Theorem 4.1]{CHR}
The group ring $RG$ of a group $G$ over a commutative ring $R$ is Jordan nilpotent of index $4$  if, and only if, one of the following statements holds:
\begin{itemize}
\item [1)] $Char(R)=8$, $G$ is abelian,
\item [2)] $Char(R)=4$, $G'\simeq C_2$,
\item [3)] $Char(R)=2$, $G'\simeq C_2 \times C_2$ and $G'\subseteq Z(G)$.
\end{itemize}
\end{theorem}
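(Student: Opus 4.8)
The plan is to prove both implications by expanding the iterated circle product on group elements and exploiting that, since $R$ is commutative and central in $RG$, the map $(\alpha_1,\alpha_2,\alpha_3,\alpha_4)\mapsto((\alpha_1\circ\alpha_2)\circ\alpha_3)\circ\alpha_4$ is $R$-multilinear; hence $RG$ is Jordan nilpotent of index $4$ if and only if $P_4(g_1,g_2,g_3,g_4):=((g_1\circ g_2)\circ g_3)\circ g_4$ vanishes for all $g_i\in G$. A direct expansion writes $P_4$ as the sum of the eight products $g_1g_2g_3g_4,\ g_2g_1g_3g_4,\ g_3g_1g_2g_4,\ g_3g_2g_1g_4$ together with the four obtained by moving $g_4$ to the front; since distinct group elements are $R$-independent in $RG$, the vanishing of $P_4$ is equivalent to saying that, for every choice of the $g_i$, each group element occurring among these eight words has total multiplicity equal to $0$ in $R$.

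First I would extract the characteristic. Putting $g_1=g_2=g_3=g_4=g$ collapses all eight words to $g^4$, giving $8\cdot1=0$, so $Char(R)\mid 8$ and $Char(R)\in\{2,4,8\}$. Evaluating at $(g,h,1,1)$ gives $4(gh+hg)$; thus if $4\neq0$ in $R$ (the case $Char(R)=8$) then $gh=hg$ for all $g,h$, so $G$ is abelian, which is statement (1). Conversely an abelian $G$ makes $P_4=8\,g_1g_2g_3g_4$, so $P_4=0$ exactly when $Char(R)\mid 8$, and the index is precisely $4$ only when $Char(R)=8$ (otherwise $P_3=4\,g_1g_2g_3=0$ already). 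This disposes of the abelian situation and shows that in the remaining cases $G$ is nonabelian and $Char(R)\in\{2,4\}$.

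The engine for the rest is a normal-form reduction valid once $G'\subseteq Z(G)$ and $G'$ has exponent $2$: each of the eight words equals $g_1g_2g_3g_4$ times a central product of commutators $(g_i,g_j):=g_i^{-1}g_j^{-1}g_ig_j$, one factor for each inverted pair, and these eight corrections are exactly the values of the linear map $\phi:V\to G'$ with $\phi(\alpha,\beta,\gamma)=\alpha\,(g_1,g_2)+\beta\,(g_1g_2,g_3)+\gamma\,(g_1g_2g_3,g_4)$, where $V$ is the three-dimensional vector space over the field of two elements and $G'$ is written additively. Hence $P_4=2^{\,3-r}\,g_1g_2g_3g_4\,\sigma$, with $r$ the dimension of the image of $\phi$ and $\sigma$ the sum of its distinct values; by $R$-independence of group elements, $P_4=0$ for the chosen $g_i$ if and only if $2^{\,3-r}=0$ in $R$. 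Over all choices the maximal realizable $r$ equals $\min(3,\dim G')$ (choosing the $g_i$ to realize prescribed independent commutators, e.g.\ setting $g_1=a,g_2=b,g_4=d,g_3=b^{-1}a^{-1}c$ makes $(g_1,g_2)=(a,b)$ and $(g_1g_2g_3,g_4)=(c,d)$ independent). Reading off the regimes $\dim G'=0,1,2$, and excluding $\dim G'\ge3$ since it would force $2^0=1=0$ in $R$, yields precisely the divisibilities $Char(R)\mid8,4,2$, i.e.\ statements (1),(2),(3), together with the ``index exactly $4$'' refinement. For $Char(R)=4$ the standing hypotheses come for free: evaluating $P_4$ at $(g,h,g,1)$ gives $4\,ghg+2(hg^2+g^2h)$, so $2(hg^2+g^2h)=0$ forces $g^2$ to commute with $h$; hence every square is central, $G/Z(G)$ has exponent $2$ and is therefore abelian, so $G$ has class $2$, $G'\subseteq Z(G)$, and $(g,h)^2=(g^2,h)=1$.

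The main obstacle is the characteristic-$2$ necessity, because there every evaluation with a repeated or trivial argument carries a factor $2$ and so gives no information; one cannot read $G'\subseteq Z(G)$ and exponent $2$ off such specializations. Here I would use that in characteristic $2$ the circle product coincides with the Lie product $[\,,\,]$, so that $RG$ is Jordan nilpotent of index $4$ precisely when it is (lower) Lie nilpotent of index $4$; I would then invoke the structure theory of Lie nilpotent group algebras (the Passi--Passman--Sehgal theorem, giving that $G$ is nilpotent with $G'$ a finite $2$-group, together with the dimension-subgroup formula for the Lie nilpotency index) to conclude that index $4$ forces $G'$ to be central of exponent $2$ with $\dim G'\le2$, the value $\dim G'=2$, i.e.\ $G'\simeq C_2\times C_2$, being exactly the borderline beyond the index-$3$ case $G'\simeq C_2$. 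Once this structural input is secured, the normal-form computation of the third paragraph closes both directions uniformly.
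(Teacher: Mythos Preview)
This statement is not proved in the paper; it is quoted from \cite[Theorem 4.1]{CHR} as background, so there is no in-paper proof to compare against directly. The closest analogue is the paper's proof of Theorem~\ref{t5} (the non-commutative case), and its characteristic-$2$ step bears precisely on the weak point of your outline.

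Your claim that in characteristic $2$ ``every evaluation with a repeated or trivial argument carries a factor $2$ and so gives no information'' is false. Take $P_4(g,h,h,h)$: since $(g\circ h)\circ h=gh^2+h^2g$ in characteristic $2$, one gets $P_4(g,h,h,h)=gh^3+hgh^2+h^2gh+h^3g$, four words each with coefficient $1$. By $R$-independence of distinct group elements these must coincide in pairs, and each of the three possible pairings forces $gh^2=h^2g$. Hence $G^2\subseteq Z(G)$, so $G'\subseteq Z(G)$ and $G'^2=1$, with no appeal to Passi--Passman--Sehgal. This is exactly the computation carried out in the paper's proof of Theorem~\ref{t5}.

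Once $G'\subseteq Z(G)$ and $G'^2=1$ are in hand, your $\phi$-map reduction $P_4=2^{\,3-r}g_1g_2g_3g_4\,\sigma$ is correct and gives a cleaner route to sufficiency than a case-by-case verification. But for necessity a gap remains: you assert that the maximal realizable $r$ equals $\min(3,\dim G')$, yet your explicit construction (choosing $g_1,g_2,g_4$ and then $g_3=b^{-1}a^{-1}c$) only produces two independent commutators, $(g_1,g_2)$ and $(g_1g_2g_3,g_4)$, and says nothing about $(g_1g_2,g_3)$, so it only shows $r\ge 2$ is attainable. To exclude $\dim G'\ge 3$ in characteristic $2$ you must actually exhibit $r=3$, i.e.\ (after substituting $h_i=g_1\cdots g_i$) find $h_1,h_2,h_3,h_4$ with $(h_1,h_2),(h_2,h_3),(h_3,h_4)$ independent in $G'$. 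This is true but needs an argument; and your alternative route via ``the dimension-subgroup formula for the Lie nilpotency index'' is not made precise (the clean formulas in the literature are for the upper Lie series, and equality of the upper and lower indices is delicate at $p=2$). As written, neither branch closes the characteristic-$2$ necessity.
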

  In this paper, we give a complete characterization of  Jordan nilpotent group rings of index $4$ over non-commutative rings. Working with non-commutative rings is much more challenging.  Levin and Rosenberger \cite{LR} have studied Lie metabelian group rings over both commutative and non-commutative rings. Sahai and Ansari \cite{SA} have given characterization of Lie centrally metabelian group rings over non-commutative rings and Lie centrally metabelian group algebras are classified in \cite{KS, SSr, ShS}. In this context, it is important to study Jordan nilpotent group rings over non-commutative rings.

 Our notations are standard.  Throughout  $G$ is a group, $R$ is an associative ring with identity $1$, and $RG$ is the group ring of $G$ over $R$. $G'$ and $Z(G)$ denote the derived subgroup and the centre of $G$, respectively. $C_n$ is cyclic group of order $n$, whereas $Char(R)$ is characteristic of  $R$. Also $(x,y)=x^{-1}y^{-1}xy$ is group commutator of $x, y \in G$ and $x^y=y^{-1}xy$. The following identities are used frequently without mentioning:
$$(xy,z)= (x,z)^y(y,z),~~~~~~~~~(x,yz)=(x,z)(x,y)^z,$$
  
$$x\circ y=yx((x,y)+1),~~~~~x^{-1}y^{-1} \circ x =((x,y)+1)y^{-1}$$
and $$y^{-1}x \circ y = x((x,y)+1),$$
for all $x,y,z\in G.$

Now, we present some preliminary results given in \cite{CHR, GM}, which are needed for our work and in the next section, we give a complete characterization of  a Jordan nilpotent group ring of index $4$ over  a non-commutative ring.
\begin{lemma}\label{l1}\cite{CHR}
If a ring $R$ is Jordan nilpotent of index $n$, then  $Char(R)$ divides $2^{n-1}$.
\end{lemma}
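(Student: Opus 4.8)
The plan is to exploit the identity element $1 \in R$ and to substitute it into the defining Jordan nilpotency relation. First I would set $\alpha_1 = \alpha_2 = \cdots = \alpha_n = 1$ in the left-normed Jordan product $(\cdots((\alpha_1 \circ \alpha_2) \circ \alpha_3) \cdots) \circ \alpha_n$, which by the hypothesis that $R$ is Jordan nilpotent of index $n$ must vanish for this particular choice of elements.

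Next I would evaluate this specialized product by induction on the number of factors. Writing $p_1 = 1$ and $p_k = p_{k-1} \circ 1$ for $k \geq 2$, the computation $p_k = p_{k-1}\cdot 1 + 1 \cdot p_{k-1} = 2\,p_{k-1}$, together with the base value $p_2 = 1 \circ 1 = 2$, shows that $p_k = 2^{k-1}\cdot 1$ for every $k \geq 1$. In particular, the full left-normed product of $n$ copies of $1$ equals $2^{n-1}\cdot 1$.

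Finally, the Jordan nilpotency of index $n$ forces $2^{n-1}\cdot 1 = 0$ in $R$, which is precisely the assertion that $Char(R)$ divides $2^{n-1}$. This argument is a direct computation rather than a structural one, so I do not expect a serious obstacle; the only point demanding care is the inductive identification of the iterated Jordan product of copies of $1$ with the scalar $2^{n-1}\cdot 1$, together with the observation that the prescribed left-normed bracketing does not alter this value.
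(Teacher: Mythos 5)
Your proof is correct and is essentially the paper's own argument: the paper simply notes that $1^{\circ n} = 2^{n-1} = 0$, which is exactly the computation you carry out by induction. Your version just makes the inductive evaluation of the left-normed product of $n$ copies of $1$ explicit.
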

 \begin{proof} This is straightforward as $2^{n-1} = 1^{\circ n}=0$.
 \end{proof}
\begin{lemma}\label{l2}\cite[Proposition 2.1]{CHR}
If a ring  $R$ has characteristic $2^{n-1}$ and $G$ is a group such that  $RG$ is Jordan nilpotent of index $n\geq2$, then $G$ is abelian.
\end{lemma}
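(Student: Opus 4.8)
The plan is to argue by contradiction: assume $G$ is not abelian and exhibit a nonzero value of a left-normed Jordan product of length $n$, contradicting Jordan nilpotency of index $n$. The one device needed is the effect of circling with the identity $1 = 1_R\cdot 1_G$ of $RG$. Since $\alpha \circ 1 = \alpha\cdot 1 + 1\cdot \alpha = 2\alpha$ for every $\alpha \in RG$, repeatedly circling with $1$ merely multiplies by powers of $2$. Hence for any $x,y \in G$,
$$\underbrace{(\cdots((x\circ y)\circ 1)\circ\cdots)\circ 1}_{n-2\ \text{copies of}\ 1} = 2^{n-2}(x\circ y),$$
which is a left-normed Jordan product of exactly $n$ elements of $RG$, and is therefore $0$ whenever $RG$ is Jordan nilpotent of index $n$. (For $n=2$ there are no copies of $1$ and $2^{n-2}=1$, so the argument below is uniform in $n\ge 2$.)

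Next I would suppose $G$ is non-abelian and fix $x,y\in G$ with $(x,y)\neq 1$. Applying the identity $x\circ y = yx((x,y)+1) = yx(x,y)+yx$ recorded in the introduction, the displayed product becomes $2^{n-2}\bigl(yx(x,y)+yx\bigr)=0$. Since $(x,y)\neq 1$, the group elements $yx(x,y)$ and $yx$ are distinct, so in this element of $RG$ each occurs with coefficient $2^{n-2}\cdot 1_R$. Vanishing of the element then forces $2^{n-2}\cdot 1_R = 0$, i.e. $\mathrm{Char}(R)$ divides $2^{n-2}$.

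This contradicts the hypothesis $\mathrm{Char}(R)=2^{n-1}$, since $2^{n-1}\nmid 2^{n-2}$. Therefore no such pair $x,y$ exists, $(x,y)=1$ for all $x,y\in G$, and $G$ is abelian. The computation is essentially routine rather than obstructed; the only points meriting care are confirming that the two monomials stay distinct (which is exactly where $(x,y)\neq 1$ is used) and observing that the characteristic hypothesis is precisely what prevents the coefficient $2^{n-2}$ from vanishing. In other words, the characteristic bound of Lemma \ref{l1} is sharp enough that at characteristic exactly $2^{n-1}$ no noncommutativity can survive.
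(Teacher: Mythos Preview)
Your argument is correct. The paper does not supply its own proof of this lemma---it simply cites \cite[Proposition 2.1]{CHR}---so there is nothing to compare against directly. That said, your approach is exactly the natural one and matches the device the paper uses elsewhere: circling with $1$ multiplies by $2$ (as in the one-line proof of Lemma~\ref{l1} and repeatedly in the proof of Theorem~\ref{t5}), so a left-normed product $((x\circ y)\circ 1)\circ\cdots\circ 1$ of length $n$ equals $2^{n-2}(x\circ y)=2^{n-2}yx((x,y)+1)$, and the distinctness of $yx$ and $yx(x,y)$ forces $2^{n-2}=0$ in $R$, contradicting $\mathrm{Char}(R)=2^{n-1}$.
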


\begin{lemma}\label{l3}\cite[Section 2]{GM}
If a  group  $G$ is such that $G^2\subseteq Z(G)$, then $G'\subseteq Z(G)$ and $G'^2=1$.
\end{lemma}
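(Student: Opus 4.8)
The plan is to prove the two conclusions in order, deriving $G'\subseteq Z(G)$ first and then using that the group has nilpotency class at most $2$ to obtain $G'^2=1$.

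For the first conclusion I would pass to the quotient $\overline{G}=G/Z(G)$. The hypothesis $G^2\subseteq Z(G)$ says precisely that every square $x^2$ is central, so each element $\overline{x}\in\overline{G}$ satisfies $\overline{x}^{\,2}=\overline{1}$; that is, $\overline{G}$ has exponent dividing $2$. A group of exponent $2$ is abelian: for any $\overline{a},\overline{b}$ one expands $(\overline{a}\,\overline{b})^2=\overline{1}$ and uses $\overline{a}^{\,-1}=\overline{a}$ and $\overline{b}^{\,-1}=\overline{b}$ to conclude $\overline{a}\,\overline{b}=\overline{b}\,\overline{a}$. Hence $\overline{G}$ is abelian, which is equivalent to $G'\subseteq Z(G)$.

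For the second conclusion I would exploit that $G'\subseteq Z(G)$ linearizes the commutator. In the identity $(xy,z)=(x,z)^y(y,z)$ the factor $(x,z)$ lies in $G'\subseteq Z(G)$, so $(x,z)^y=(x,z)$ and the identity collapses to $(xy,z)=(x,z)(y,z)$. Taking $y=x$ gives $(x^2,z)=(x,z)^2$. But $x^2\in Z(G)$, so $(x^2,z)=1$, whence $(x,z)^2=1$ for all $x,z\in G$. Finally, $G'$ is generated by the commutators $(x,z)$, each of which now squares to $1$, and $G'$ is abelian because it sits inside $Z(G)$; in an abelian group generated by involutions every element has order dividing $2$, so $w^2=1$ for every $w\in G'$, which is exactly $G'^2=1$.

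I expect no serious obstacle, as the statement is elementary; the only point requiring genuine care is the order of operations. One must secure $G'\subseteq Z(G)$ \emph{before} invoking the simplified commutator identity, since the reduction $(x,z)^y=(x,z)$ relies on commutators already being central. An alternative, more computational route would bypass the quotient and instead start directly from the centrality of $x^2$, $y^2$, and $(xy)^2$, combining these to extract $(x,y)$ and its square; I would prefer the quotient argument above because it separates the two claims cleanly and avoids the bookkeeping of that direct expansion.
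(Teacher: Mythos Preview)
Your argument is correct and is the standard one: passing to $G/Z(G)$ to obtain an exponent-$2$ (hence abelian) quotient gives $G'\subseteq Z(G)$, and then bilinearity of the commutator in a class-$2$ group together with $(x^2,z)=1$ forces every commutator to be an involution, so $G'^2=1$. The paper does not supply its own proof of this lemma; it merely quotes the statement from \cite{GM}, so there is nothing further to compare.
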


\begin{lemma}\label{l4}\cite[Lemma 4]{JS}
	Let $G$ be a non-abelian group such that, for all $x,y\in G$ with $(x,y)\neq1$, we have $(y,z)\in \langle(x,y)\rangle$ for all $z\in G$. Then $G'$ is cyclic.
\end{lemma}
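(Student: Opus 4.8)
The plan is to prove that $G'$ sits inside a single cyclic subgroup, which immediately forces $G'$ itself to be cyclic. Since $G$ is non-abelian there is a non-central element, and to each non-central element I would attach a cyclic group capturing all of its commutators; the heart of the argument is then to show that these attached cyclic groups do not actually depend on the chosen element. Concretely, for a non-central $a\in G$ there is some $w$ with $(w,a)\neq1$, and applying the hypothesis to the pair $(w,a)$ yields $(a,z)\in\langle(w,a)\rangle$ for every $z\in G$.

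First I would verify that $\langle(w,a)\rangle$ is independent of the witness $w$: if $(w_1,a),(w_2,a)\neq1$, then specialising the inclusion to $z=w_2$ gives $(a,w_2)\in\langle(w_1,a)\rangle$, hence $(w_2,a)=(a,w_2)^{-1}\in\langle(w_1,a)\rangle$, and symmetrically, so the two cyclic groups coincide. Write $C_a$ for this common group; by construction every commutator with $a$ in either slot lies in $C_a$. The same computation with the choice $w=b$ shows that whenever $(a,b)\neq1$ one has $C_a=\langle(a,b)\rangle=C_b$. Thus any two non-commuting non-central elements share the same attached cyclic group.

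The main obstacle is to glue together the groups $C_a$ across pairs of non-central elements that \emph{commute}, since the non-commuting relation need not link all non-central elements directly. I would handle this without invoking connectivity of the non-commuting graph, via the following trick. Suppose $C_a\neq C_b$ for non-central $a,b$; then necessarily $(a,b)=1$. Pick $u,v$ with $(a,u)\neq1$ and $(b,v)\neq1$; the previous paragraph forces $(b,u)=1$ and $(a,v)=1$, since otherwise $C_a=C_b$ would follow at once. Now examine the product $ab$: using $(ab,u)=(a,u)^b(b,u)$ and $(ab,v)=(a,v)^b(b,v)$ together with $(b,u)=(a,v)=1$ gives $(ab,u)=(a,u)^b\neq1$ and $(ab,v)=(b,v)\neq1$. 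Hence $ab$ is non-central and satisfies both $C_{ab}=C_u=C_a$ and $C_{ab}=C_v=C_b$, contradicting $C_a\neq C_b$.

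It follows that all the groups $C_a$ coincide in one cyclic group $C$. Every non-trivial commutator $(a,b)$ lies in $C_a=C$, and trivial commutators lie in $C$ trivially, so $G'\subseteq C$; as a subgroup of a cyclic group, $G'$ is cyclic. I expect the genuinely delicate point to be exactly the commuting case resolved by the $ab$ construction, everything else being a direct application of the standard identity $(xy,z)=(x,z)^y(y,z)$ recorded above.
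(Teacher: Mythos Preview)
Your argument is correct. The well-definedness of $C_a$, the equality $C_a=C_b$ whenever $(a,b)\neq1$, and the $ab$-trick for the commuting case all go through as written; in particular the crucial step uses only the identity $(xy,z)=(x,z)^y(y,z)$ and the fact that conjugation preserves non-triviality, so $(ab,u)=(a,u)^b\neq1$ and $(ab,v)=(b,v)\neq1$ really do force $C_{ab}=C_u=C_a$ and $C_{ab}=C_v=C_b$.

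As for comparison: the paper does not prove this lemma at all. It is simply quoted from \cite{JS} (Juh\'asz--Sahai), so there is no in-paper argument to weigh yours against. Your self-contained proof is therefore a genuine addition rather than a variant of anything appearing here.
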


\section{Main Result}
In this section, we assume that $R$ is a non-commutative ring. The aim is to identify the conditions under which the group ring $RG$ of a group $G$ over  $R$ is Jordan nilpotent of index $4$ and thereby complete the work done in \cite{CHR}. We begin by presenting certain identities whose significance will become apparent later in this paper.

\begin{lemma}\label{l6}
Let $R$ be a ring. Then for all $\alpha, \beta, \gamma \in R$, $\alpha \beta \circ \gamma = \alpha(\beta\circ \gamma)+(\gamma\circ \alpha)\beta-2\alpha \gamma \beta$. 
\end{lemma}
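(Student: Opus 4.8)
The plan is to prove this purely by expanding both sides through the defining formula $x \circ y = xy + yx$, since the claimed identity is a bare associative-ring identity with no hypotheses on $R$ beyond associativity. Working in a non-commutative ring, the one discipline I must maintain throughout is to never permute the order of the factors $\alpha,\beta,\gamma$; every monomial I write must keep its letters in the position forced by the expansion.

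First I would expand the left-hand side: $\alpha\beta \circ \gamma = (\alpha\beta)\gamma + \gamma(\alpha\beta) = \alpha\beta\gamma + \gamma\alpha\beta$. This is the target I am trying to reproduce from the right-hand side. Next I would expand each of the three summands on the right separately: $\alpha(\beta\circ\gamma) = \alpha\beta\gamma + \alpha\gamma\beta$, then $(\gamma\circ\alpha)\beta = \gamma\alpha\beta + \alpha\gamma\beta$, and finally the correction term is simply $-2\alpha\gamma\beta$ as written.

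The last step is to add the three expanded summands and collect like monomials. The two copies of $\alpha\gamma\beta$ produced by $\alpha(\beta\circ\gamma)$ and by $(\gamma\circ\alpha)\beta$ are exactly annihilated by the $-2\alpha\gamma\beta$ term, leaving $\alpha\beta\gamma + \gamma\alpha\beta$, which coincides with the left-hand side; this completes the verification. There is essentially no obstacle here—the computation is routine—so the only thing worth flagging is the \emph{role} of the $-2\alpha\gamma\beta$ term: it is precisely the non-commutative correction needed because $\alpha\gamma\beta$ cannot be rewritten as $\alpha\beta\gamma$ or $\gamma\alpha\beta$ when $R$ is non-commutative. This is what distinguishes the identity from the naive Leibniz-type rule and is the reason it will later be useful when handling products inside $RG$.
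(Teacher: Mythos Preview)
Your proof is correct and essentially identical in spirit to the paper's: both reduce to expanding $\circ$ via $x\circ y = xy + yx$ and observing that the $-2\alpha\gamma\beta$ term cancels the two extraneous copies of $\alpha\gamma\beta$. The only cosmetic difference is that the paper works from the left-hand side by adding and subtracting $2\alpha\gamma\beta$ and regrouping, whereas you expand both sides separately and compare.
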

\begin{proof}
\begin{align*}	
\alpha \beta\circ \gamma&=\alpha \beta \gamma + \gamma \alpha \beta + 2 \alpha \gamma\beta - 2\alpha \gamma \beta\\
&=\alpha(\beta \gamma + \gamma\beta)+(\gamma\alpha+\alpha \gamma)\beta-2\alpha \gamma\beta\\
&=\alpha(\beta\circ \gamma)+(\alpha\circ \gamma)\beta-2 \alpha \gamma\beta.
\end{align*}
\end{proof}
\begin{lemma}\label{l7}
Let $G$ be a group and $R$ be a ring. Then for all $x,y\in G$ and $\alpha, \beta\in R$,  
$$\alpha x\circ \beta y=(\alpha \circ \beta)yx+ \alpha \beta yx((x,y)-1).$$
\end{lemma}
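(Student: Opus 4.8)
The plan is to verify Lemma 1.9 (labelled \texttt{l7}) by direct computation, expanding the Jordan product $\alpha x \circ \beta y = \alpha x \beta y + \beta y \alpha x$ and then massaging both terms into the claimed form. The key structural fact I would exploit is the commutativity of group elements with ring scalars in a group ring: since $R$ has identity and $G$ embeds in $RG$, the scalars $\alpha, \beta \in R$ commute with the group elements $x, y \in G$ inside $RG$. This lets me slide scalars past group elements freely, which is the crucial mechanism making the identity work.

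First I would write $\alpha x \circ \beta y = \alpha x \beta y + \beta y \alpha x$. Using that $\beta$ commutes with $x$ and $\alpha$ commutes with $y$, I would rewrite the two summands as $\alpha \beta \, xy$ and $\beta \alpha \, yx$ respectively. The goal term $(\alpha \circ \beta) yx = (\alpha\beta + \beta\alpha)yx$ already contains $\beta\alpha\, yx$, so I would aim to express $\alpha\beta\, xy$ in terms of $\alpha\beta\, yx$. Here I would invoke the definition $x^y = y^{-1}xy$ together with $(x,y) = x^{-1}y^{-1}xy$, which gives $xy = yx(x,y)$; hence $\alpha\beta\, xy = \alpha\beta\, yx(x,y)$. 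Writing $(x,y) = ((x,y)-1)+1$ splits this as $\alpha\beta\, yx((x,y)-1) + \alpha\beta\, yx$.

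Combining, I would then have
\begin{align*}
\alpha x \circ \beta y &= \alpha\beta\, yx + \alpha\beta\, yx((x,y)-1) + \beta\alpha\, yx\\
&= (\alpha\beta + \beta\alpha)yx + \alpha\beta\, yx((x,y)-1)\\
&= (\alpha\circ\beta)yx + \alpha\beta\, yx((x,y)-1),
\end{align*}
which is exactly the asserted identity.

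I do not anticipate a genuine obstacle here, since the statement is a computational identity rather than a structural theorem; the only point requiring care is the bookkeeping of where scalars commute with group elements and where they do not. In particular, because $R$ is non-commutative, I must be careful never to interchange $\alpha$ and $\beta$ illegitimately — the final appearance of $\alpha\beta$ (as opposed to $\beta\alpha$) in the correction term $\alpha\beta\, yx((x,y)-1)$ must be preserved in exactly that order, which is why the identity is asymmetric in the two scalars. This lemma is presumably intended as a companion to Lemma 1.8 (labelled \texttt{l6}) for use in later expansions of iterated Jordan products, and the precise scalar ordering will matter in those applications.
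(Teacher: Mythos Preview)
Your proof is correct and follows essentially the same approach as the paper: both expand $\alpha x\circ\beta y=\alpha\beta\,xy+\beta\alpha\,yx$, rewrite $xy=yx(x,y)$, and split off $(x,y)-1$ to combine the remaining terms into $(\alpha\circ\beta)yx$. The paper's version is simply a more compressed three-line display of the same computation.
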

\begin{proof}
\begin{align*}		
\alpha x \circ \beta y&= \alpha \beta xy + \beta \alpha yx \\ 
    	&=\alpha \beta yx((x,y)-1) + \alpha \beta yx + \beta \alpha yx \\ 
		&=\alpha \beta yx((x,y)-1) + (\alpha \circ \beta)yx. 		
\end{align*}
\end{proof}
\begin{theorem}\label{t5}
Let $RG$ be the group ring of a group $G$ over  a non-commutative  ring $R$. Then  $RG$ is Jordan nilpotent of index $4$ if, and only if, one of the following conditions holds:
\begin{enumerate}
\item [1)] $G$ is abelian, $R$ is Jordan nilpotent of index $4$ with $Char(R)=2$ or $4$ or $8$; 
\item [2)] $Char(R)=4$,  $2(R\circ R)=0$, $(R\circ R)\circ R=0$, $(R\circ R)(R\circ R)=0$, and $G'\simeq C_2$.
\item [3)] m$Char(R)=2$,  $(R\circ R)\circ R=0$, $(R\circ R)(R\circ R)=0$ and $G'\simeq C_2$.
\end{enumerate}
\end{theorem}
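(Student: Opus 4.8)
The plan is to reduce the whole problem to a single universal computation on monomials. Since the fourfold circle product $(\xi_1,\xi_2,\xi_3,\xi_4)\mapsto(((\xi_1\circ\xi_2)\circ\xi_3)\circ\xi_4)$ is additive in each argument, $RG$ is Jordan nilpotent of index $4$ if and only if it vanishes on all quadruples of monomials $\alpha_ix_i$ ($\alpha_i\in R$, $x_i\in G$). Expanding by the definition of $\circ$ gives
$$(((\alpha_1x_1\circ\alpha_2x_2)\circ\alpha_3x_3)\circ\alpha_4x_4)=\sum_{\sigma}\alpha_{\sigma(1)}\alpha_{\sigma(2)}\alpha_{\sigma(3)}\alpha_{\sigma(4)}\,x_{\sigma(1)}x_{\sigma(2)}x_{\sigma(3)}x_{\sigma(4)},$$
the sum running over the eight orderings produced by the three circle operations. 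I would first record the reduction formulas of Lemmas \ref{l6} and \ref{l7} and dispose of the abelian case: when $G$ is abelian each rearranged word equals $x_1x_2x_3x_4$, so the product collapses to $(((\alpha_1\circ\alpha_2)\circ\alpha_3)\circ\alpha_4)\,x_1x_2x_3x_4$. Hence for abelian $G$, $RG$ is Jordan nilpotent of index $4$ precisely when $R$ is, and $Char(R)\in\{2,4,8\}$ by Lemma \ref{l1}; this is statement (1).

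For necessity with $G$ non-abelian I would extract the remaining conditions by specialising the $x_i$. As $R\cong R\cdot1\subseteq RG$, Lemma \ref{l1} gives $Char(R)\mid 8$. Taking $x_3=x_4=1$ with $(x_1,x_2)\ne 1$ separates the distinct group elements $x_1x_2$ and $x_2x_1$, so the coefficient $((\alpha_1\alpha_2\circ\alpha_3)\circ\alpha_4)$ must vanish; putting $\alpha_2=1$ yields $(R\circ R)\circ R=0$, and evaluating $(\alpha\circ\beta)\circ\gamma$ at $\gamma=1$ then gives $2(R\circ R)=0$. Choosing all $\alpha_i=1$ and $(x_1,x_2,x_3,x_4)=(x,y,x,x)$ forces the four surviving words $xyx^2,x^2yx,yx^3,x^3y$ to pair off, which happens iff $y$ commutes with $x^2$; thus $G^2\subseteq Z(G)$ and, by Lemma \ref{l3}, $G'\subseteq Z(G)$ with $G'^2=1$. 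To bound $G'$ I would feed in two independent commutators: if $(x,y)$ and $(z,w)$ are independent in $G'$, a suitable assignment makes the coefficient of $g=xyzw$ equal $(\alpha_1\alpha_2\alpha_3+\alpha_3\alpha_1\alpha_2)\alpha_4$; setting $\alpha_4=\alpha_2=1$ forces $\alpha_1\circ\alpha_3=0$, i.e. $R$ commutative, a contradiction. Hence the hypothesis of Lemma \ref{l4} holds, $G'$ is cyclic, and so $G'\simeq C_2$. Finally, writing $G'=\{1,\kappa\}$ and taking $(x_1,x_2,x_3,x_4)=(x,1,y,x)$ with $(x,y)=\kappa$ (available for any non-abelian such $G$), the coefficient of $xyx$ reduces — using that $(R\circ R)\circ R=0$ makes every element of $R\circ R$ anticommute with $R$ — to $(\alpha_3\circ\alpha_4)(\alpha_1\circ\alpha_2)$, whose vanishing is exactly $(R\circ R)(R\circ R)=0$. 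Splitting on $Char(R)\in\{2,4\}$ gives statements (2) and (3).

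For sufficiency I would run this in reverse. With $G'=\{1,\kappa\}$, $\kappa$ central and $\kappa^2=1$, each rearranged word $x_{\sigma(1)}\cdots x_{\sigma(4)}$ equals $g$ or $g\kappa$ according to the $G'$-value of the commutators it collects, so the fourfold product splits as $g\cdot A+g\kappa\cdot B$ for explicit $\mathbb{Z}$-combinations $A,B$ of the eight scalar words. It then remains to check, for each pattern of commutators among $x_1,\dots,x_4$, that $A=B=0$: invoking $(R\circ R)\circ R=0$ (hence $2(R\circ R)=0$ and the anticommuting of $R\circ R$ with $R$) together with $(R\circ R)(R\circ R)=0$, each of $A,B$ collapses to a combination of a triple Jordan product, a $2(R\circ R)$-term and an $(R\circ R)(R\circ R)$-term, all of which vanish. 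Statement (1) is covered by the abelian collapse above.

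The main obstacle is the step pinning $G'\simeq C_2$ rather than the $C_2\times C_2$ allowed over commutative rings (Theorem \ref{t4}): this is precisely where non-commutativity of $R$ is indispensable, and the delicate point is to manufacture, from two independent commutators, a substitution whose vanishing coefficient is an arbitrary Jordan product $\alpha\circ\beta$, thereby collapsing $R$ to a commutative ring. The secondary difficulty is organisational — verifying $A=B=0$ across all commutator patterns on four elements in the sufficiency direction — but, once the three identities on $R$ are in hand, each pattern is a short reduction.
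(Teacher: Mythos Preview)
Your overall strategy---reduce to monomials and read off the ring and group conditions by specialising the $x_i$ and $\alpha_i$---is exactly the paper's, and your extractions of $(R\circ R)\circ R=0$, $2(R\circ R)=0$, $G^2\subseteq Z(G)$ and $(R\circ R)(R\circ R)=0$ are sound (the substitutions differ from the paper's $((\alpha x\circ y)\circ\beta)\circ\gamma$, $((g\circ h)\circ h)\circ 1$, $((\alpha x\circ\beta)\circ\gamma y)\circ\delta$, but yield the same conclusions). The sufficiency sketch is also in the same spirit as the paper's case analysis on $s_1,s_2,s_3$.

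The real gap is your argument for $G'\simeq C_2$. You assert that ``if $(x,y)$ and $(z,w)$ are independent in $G'$, a suitable assignment makes the coefficient of $g=xyzw$ equal $(\alpha_1\alpha_2\alpha_3+\alpha_3\alpha_1\alpha_2)\alpha_4$''. The only way to land on exactly that two-term coefficient with the eight fixed orderings is to have $x_1x_2x_3x_4=x_3x_1x_2x_4$ and all six remaining words distinct from it; with $(x_1,x_2,x_3,x_4)=(x,y,z,w)$ this forces $(x,z)=(y,z)=1$ and still needs $(x,w),(y,w)$ controlled. But in a general class-$2$ group with $G'$ elementary abelian of rank $\ge 2$ no such quadruple need exist: take $G=\langle a,b,c\rangle$ with $(a,b)=s$, $(a,c)=t$, $(b,c)=1$. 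Any element commuting with both $a$ and $b$ is central, so there is no pair $z,w$ with $(z,w)\notin\langle s\rangle$ and $z,w$ commuting with $x=a$, $y=b$. Your ``suitable assignment'' is therefore not available in general, and the step collapses.

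The paper avoids this by working with \emph{three} group elements and inverses: from $(x,y)=s\neq 1$ it evaluates $\big((x^{-1}y^{-1}\circ x)\circ zy\big)\circ 1$ in characteristic $4$ and $\big((x^{-1}y^{-1}\circ x)\circ zy\big)\circ z^{-1}x$ in characteristic $2$, obtaining respectively $2z((z,y)+1)(s+1)=0$ and $x((x,z)+1)((z,y)+1)(s+1)=0$. These force $(y,z)\in\langle s\rangle$ (or one of two symmetric alternatives in characteristic~$2$), which is precisely the hypothesis of Lemma~\ref{l4}, whence $G'$ is cyclic and so $G'\simeq C_2$. If you want to keep your contradiction-style argument you will need a substitution built from three elements $x,y,z$ with $(x,y)$ and $(y,z)$ independent (whose existence \emph{is} guaranteed by the failure of Lemma~\ref{l4}'s hypothesis), not from an unconstrained quadruple.
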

\begin{proof}
 First we prove that the given conditions are necessary. Suppose that $RG$ is Jordan nilpotent of index $4$. By Lemma \ref{l1}, $Char(R) =2$ or $4$ or $8$. Obviously,  $R$ is Jordan nilpotent of index $4$. Let $G$ be non-abelian.   By Lemma \ref{l2}, $Char(R)=2$ or $4$.

Suppose $Char(R)=4$ and $x,y\in G$ such that $s=(x,y)\neq 1$. Then for $\alpha, \beta, \gamma, \delta \in R$,		
$$0= ( (\alpha x \circ y )\circ \beta )\circ 1=2 (\alpha \circ \beta ) (x \circ y )=2(\alpha \circ \beta)yx(s+1).$$
which implies that $2(R\circ R)=0$. Further,
$$0= ( (\alpha x \circ y )\circ \beta )\circ \gamma = ((\alpha \circ \beta )\circ \gamma ) (x \circ y ) = ((\alpha \circ \beta)\circ \gamma)yx(s+1)$$
 and hence $(R\circ R)\circ R=0$. Now for any $g,h \in G$,
$$0= ( (g\circ h )\circ h )\circ 1= 2 (gh^2+2ghg+h^2g )$$
 yields that $gh^2=h^2g$. Therefore, $G^2\subseteq Z(G)$ and by Lemma \ref{l3}, $G'\subseteq Z(G)$ and $G'^2=1$.  If $z \in G$, then
\begin{align*}		
0 & =\big( (x^{-1}y^{-1} \circ x) \circ zy\big)\circ 1\\
   &= 2y^{-1}(s+1)\circ zy\\
   &= 2z((z,y)+1)(s+1).		
\end{align*}
 Hence, $(y,z) \in \langle s \rangle$ and $G'\simeq C_2$ by Lemma \ref{l4}.
 
Consider,  $( (\alpha x\circ \beta )\circ \gamma y )\circ \delta =0$. Using Lemmas \ref{l6} and \ref{l7} and the fact that as $(R\circ R)\circ R=0$ and $2(R\circ R)=0$, we get
 \begin{align*}
0&=( (\alpha x\circ \beta )\circ \gamma y )\circ \delta\\  
&= ( (\alpha \circ \beta)x\circ \gamma y )\circ \delta\\ 
 &=  \big( ( (\alpha \circ \beta)\circ \gamma \big) yx +(\alpha \circ \beta)\gamma yx(s-1)\big)\circ \delta\\ 
 &= \big( (\alpha \circ \beta)\gamma \circ \delta \big)yx(s-1) \\
 &= \big( (\alpha \circ \beta)(\gamma \circ \delta)+((\alpha \circ \beta)\circ \delta)\gamma - 2(\alpha \circ \beta)\delta\gamma \big)yx(s-1)\\
 &= (\alpha \circ \beta)(\gamma \circ \delta)yx(s-1).
\end{align*}

 Thus,  $(R\circ R)(R\circ R)=0.$
 
Now, let $Char(R)=2$. Choosing  $x,y\in G$ such that $s=(x,y)\neq 1$ and $\alpha, \beta, \gamma, \delta \in R$ as in the previous case, $( (\alpha x\circ y)\circ \beta )\circ \gamma = 0$ yields  that  $(R\circ R)\circ R=0.$ Further, for $g, h \in G$,
 $$0=((g\circ h)\circ h)\circ h=gh^3+hgh^2+h^2gh+h^3g$$
yields that $gh^3=hgh^2$ or $h^2gh$ or $h^3g$. In the first of these conditions $gh=hg$ and in the second condition $gh^2=h^2g$.  Also, if  $gh^3=h^3g$, then $hgh^2=h^2gh$, which  again gives $gh=hg$. Therefore, we conclude that  $gh^2=h^2g$. So,  $G^2\subseteq Z(G)$ and by Lemma \ref{l3},  $G'\subseteq Z(G)$ and $G'^2=1$.
We also have
\begin{align*}		
0 &= \big( (x^{-1}y^{-1} \circ x) \circ zy\big)\circ z^{-1}x\\
   &= \big(y^{-1}(s+1)\circ zy\big)\circ z^{-1}x\\
   &= z((z,y)+1)(s+1)\circ z^{-1}x\\
   &= (z\circ z^{-1}x)((z,y)+1)(s+1)\\
   &= x((x,z))+1)((z,y)+1)(s+1),		
\end{align*}
for all $ z\in G$.
Hence, we have  the following possibilities :
\begin{enumerate}
\item $(y,z) \in \langle s \rangle$;
\item $(x,z) \in \langle s \rangle$;
\item $(y,z), (x,z) \notin \langle s \rangle$ and $(x,z) \in \langle (y,z) \rangle$.
\end{enumerate}
In all these cases, $G'\simeq C_2$ by Lemma \ref{l4}.

Next,  $( (\alpha x\circ \beta )\circ \gamma y )\circ \delta = 0$ implies that $(R\circ R)(R\circ R)=0$ as we have seen in the previous case.

Conversely, if $G$ is abelian and $R$ is Jordan nilpotent of index $4$, then
$$((RG \circ RG) \circ RG) \circ RG \subseteq (((R \circ R) \circ R) \circ R)G=0.$$
 
Let $Char(R)=4$,  $2(R\circ R)=0$, $(R\circ R)\circ R=0$, $(R\circ R)(R\circ R)=0$, and $G'\simeq C_2 =\langle s \rangle$. It is easy to see that $(s-1)^2= -2(s-1)$ and $(s-1)^3 = 0$ and  $G' \subseteq Z(G)$.  Let $x,y,z,t\in G$, $(x,y)=s_1$, $(yx,z)=s_2$ and $(zyx,t)=s_3$. Then for $\alpha,\beta, \gamma, \delta \in R$, we have
$$\big((\alpha\circ \beta)\gamma \circ \delta \big)=(\alpha \circ \beta)(\gamma \circ \delta)+\big((\alpha\circ \beta)\circ \delta \big)\gamma - 2(\alpha \circ \beta)\delta \gamma=0$$
and
$$\alpha x\circ \beta y=(\alpha \circ \beta)yx + \alpha \beta yx(s_1-1),$$
\textbf{Case (1)} $s_1=1.$ \\
\begin{align*}
(\alpha x\circ \beta y)\circ \gamma z &=(\alpha \circ \beta)yx\circ \gamma z\\
&=((\alpha \circ \beta)\circ \gamma)zyx+(\alpha\circ \beta)\gamma zyx(s_2-1)\\
&=(\alpha \circ \beta)\gamma zyx(s_2-1).
\end{align*}
If $s_2=1$, then $(\alpha x\circ \beta y)\circ \gamma z=0$. Otherwise, $s_2=s$ and $(\alpha x \circ \beta y)\circ \gamma z = (\alpha \circ \beta)\gamma zyx(s-1)$. Now,
\begin{align*}
\big(( \alpha x\circ \beta y)\circ \gamma z\big)\circ \delta t &=\big( (\alpha \circ \beta)\gamma zyx\circ \delta t\big)(s-1)\\
&=\Big( \big((\alpha \circ \beta)\gamma \circ \delta \big)tzyx+(\alpha \circ \beta)\gamma \delta tzyx(s_3-1)\Big)(s-1)\\
&=(\alpha \circ \beta)\gamma \delta tzyx(s_3-1)(s-1).
\end{align*}
If $s_3= 1$, then $\big((\alpha x\circ \beta y)\circ \gamma z\big)\circ \delta t=0$.  On the other hand, if $s_3=s$, then again
$$\big((\alpha x\circ \beta y)\circ \gamma z\big)\circ \delta t= -2(\alpha \circ \beta)\gamma\delta tzyx(s-1)=0.$$
 \textbf{Case (2)} $s_1=s.$ 
$$ \alpha x\circ\beta y=(\alpha\circ \beta)yx + \alpha \beta yx(s-1)$$
and 
 \begin{align*}
(\alpha x\circ \beta y) \circ \gamma z&=\big( (\alpha \circ \beta)yx+ \alpha \beta yx(s-1)\big)\circ \gamma z\\
&=\big( (\alpha \circ \beta)\circ \gamma \big)zyx+(\alpha \circ \beta)\gamma zyx(s_2-1)\\
&\quad+\big((\alpha \beta \circ \gamma)zyx+ \alpha \beta \gamma zyx(s_2-1)\big)(s-1).
\end{align*}
Therefore, if $s_2=1$, then $(\alpha x\circ \beta y) \circ \gamma z = (\alpha \beta \circ \gamma)zyx(s-1)$ and 
\begin{align*}
\big( (\alpha x\circ \beta y) \circ \gamma z\big)\circ \delta t  &=\big((\alpha \beta \circ \gamma)zyx \circ \delta t\big)(s-1)\\
&=\Big(\big((\alpha \beta \circ \gamma)\circ \delta \big)tzyx + (\alpha \beta \circ \gamma)\delta tzyx(s_3-1)\Big)(s-1).
\end{align*}
So, if $s_3=1$, then $\big( (\alpha x\circ \beta y) \circ \gamma z\big)\circ \delta t = 0$ and if $s_3=s$, then 
$$\big( (\alpha x\circ \beta y) \circ \gamma z\big)\circ \delta t = -2(\alpha \beta \circ \gamma)\delta tzyx(s-1)=0.$$
Now let $s_2=s$. Then
$$(\alpha x\circ \beta y) \circ \gamma z = \big( (\alpha \circ \beta)\gamma + (\alpha \beta \circ \gamma) -2\alpha \beta \gamma\big)zyx(s-1)$$
and
\begin{align*}
\big( (\alpha x\circ \beta y) \circ \gamma z \big) \circ \delta t	
&=\Big( \big( \big((\alpha \circ \beta)\gamma + (\alpha \beta \circ \gamma) -2\alpha \beta \gamma \big)zyx \big)\circ \delta t \Big)(s-1)\\
&=\Big( \big((\alpha \circ \beta)\gamma \circ \delta \big)tzyx+(\alpha \circ \beta)\gamma \delta tzyx(s_3-1)\\
&\quad +\big((\alpha \beta \circ \gamma)\circ \delta \big)tzyx+(\alpha \beta \circ \gamma)\delta tzyx(s_3-1)\\
&\quad -2(\alpha \beta \gamma \circ \delta)tzyx -2\alpha \beta \gamma \delta tzyx(s_3-1)\Big)(s-1)
\end{align*}
Hence $\big( (\alpha x\circ \beta y) \circ \gamma z \big) \circ \delta t =0$, if $s_3=1$. Whereas, if $s_3 =s$, then
\begin{align*}
\left( (\alpha x\circ \beta y) \circ \gamma z \right) \circ \delta t
&=2\big(-(\alpha \circ \beta)\gamma \delta -(\alpha \beta \circ \gamma)\delta + 2\alpha \beta \gamma \delta \big)tzyx(s-1)\\
&=0.
\end{align*}
Therefore, $RG$ is Jordan nilpotent of index $4$ in every possible case. 

It can be easily concluded from the above that if $Char(R)=2$,  $(R\circ R)\circ R=0$, $(R\circ R)(R\circ R)=0$ and $G'\simeq C_2$, then also $RG$ is Jordan nilotent of index $4$. This completes the proof of the theorem.
\end{proof}



\bibliographystyle{amsplain}

\end{document}